\documentclass{amsart}

\usepackage{amsmath, amssymb, amsthm}
\usepackage{hyperref}
\usepackage{cleveref}
\usepackage{chngcntr}
\usepackage{thmtools}
\usepackage{thm-restate}
\usepackage{fancyhdr}
\usepackage{dsfont}
\usepackage[english]{babel}
\usepackage[utf8]{inputenc}
\usepackage{commath}
\usepackage{marvosym}
\usepackage{mathtools}

\usepackage{thmtools}
\usepackage{thm-restate}
\usepackage{soul}

\usepackage{tikz-cd}

\usepackage[margin=1.5in]{geometry}

\usepackage{comment}

\usepackage{enumitem}

\newtheorem{theorem}{Theorem}[section]

\newtheorem{proposition}[theorem]{Proposition}

\newtheorem{lemma}[theorem]{Lemma}

\newtheorem*{theorem*}{Theorem}
\newtheorem*{corollary*}{Corollary}
\newtheorem*{lemma*}{Lemma}
\newtheorem*{proposition*}{Proposition}

\theoremstyle{definition}
\newtheorem{definition}[theorem]{Definition}
\newtheorem{remark}[theorem]{Remark}
\newtheorem{example}[theorem]{Example}

\newtheorem{question}[theorem]{Question}

\newtheorem{chunk}[theorem]{}

\newtheorem{thmx}{Theorem}[section]

\DeclareMathOperator{\tor}{Tor}

\DeclareMathOperator{\uppercidim}{CI^{*}dim}

\newcommand{\D}{\operatorname{D}}
\newcommand{\daq}[4]{\operatorname{D}_{#1}(#2|#3;#4)}
\newcommand{\lco}{\operatorname{L}}

\newcommand{\m}{\mathfrak{m}}
\newcommand{\n}{\mathfrak{n}}
\newcommand{\Char}{\operatorname{char}}

\newcommand{\h}[2][]{\operatorname{H}_{#1}(#2)}

\newcommand{\fdim}[2]{\operatorname{fd}_{#1}#2}
\newcommand{\cone}[1]{\operatorname{cone}(#1)}

\newcommand{\wpi}{\widetilde{\pi}}

\newcommand{\pil}[1]{\operatorname{\pi_{#1}-large}}

\newcommand{\pdim}[2]{\operatorname{pd}_{#1}#2}

\author{Andrew J. Soto Levins}
\address{Texas Tech University, TX 79409. U.S.A.}
\email{ansotole@ttu.edu}
\urladdr{https://sites.google.com/view/andrewjsotolevins}

\author{Ryan Watson}
\address{University of Nebraska Lincon, NE 68588. U.S.A.}
\email{rwatson9@huskers.unl.edu}
\urladdr{https://rawatson1997.github.io}

\title{Large Homomorphisms on the Homotopy Lie Coalgebra}
\date{\today}
\subjclass[2020]{Primary: 13D03, 13D05 Secondary: 16E45}
\keywords{Andr\'e-Quillen homology, homotopy lie algebra, large homomorphism, quasi-complete intersection, upper complete intersection dimension}

\begin{document}
\begin{abstract}
We introduce and study a notion of large homomorphisms on the homotopy lie coalgebra; these homomorphisms are a variant of the large homomorphisms of Levin. As a consequence of our work, we establish new cases of a homotopy lie coalgebra analog of a conjecture of Quillen as proposed by Briggs. 
\end{abstract}

\maketitle
\section*{Introduction}
Throughout this paper, ring means unital commutative noetherian local ring. In 1978 Avramov \cite{Avramov:1978} introduced the notion of small homomorphisms in commutative algebra. These are local homomorphisms $\varphi\colon (R,\m,k) \to (S,\n,k)$ such that the induced map, $\varphi_*\colon \tor^R(k,k) \to \tor^S(k,k)$ is injective. Inspired by this, Levin \cite{Levin:1980} defined the dual notion of large homomorphisms which are local homomorphisms in which the map $\varphi_*$ is surjective. In this paper, we define a notion of large homomorphisms on the homotopy lie coalgebra (see \Cref{C_hlca_def} and \Cref{C_LESwithPi} for the definition of the homotopy lie coalgebra).

Given a local map $\varphi\colon (R,\m,k)\rightarrow (S,\n,\ell)$, we say $\varphi$ is $\pil{n}$ if the natural map
\[\varphi_{n}\colon \pi_{n}(R)\otimes_{k}\ell\rightarrow \pi_{n}(S)\]
is surjective. By \cite[Example 7]{Briggs:2018}, if $\varphi$ is large, then $\varphi$ is $\pil{n}$ for all $n$. This gives many examples of homomorphisms that are $\pil{n}$ for a given $n$; also see \cite{Gheibi/Takahashi:2021}. The main result of this paper is the following, which is a consequence of \Cref{T_UpperPiDimTheorem}. To make sense of it we first recall the notion of upper c.i. dimension. This is a variant of the usual notion of c.i. dimension (see \cite{Avramov/Gasharov/Peeva:1997,Majadas:2016}) but the faithfully flat extension in the quasi-deformation is also required to have regular closed fiber. We denote the upper c.i. dimension of a map by $\uppercidim{\varphi}$.

\begin{thmx} \label{T_MainTheorem} Let $\varphi\colon R\rightarrow (S,\n,\ell)$ be a local homomorphism with $\uppercidim{\varphi}$ finite. If $\varphi$ is $\pil{2n+1}$ for some $n\geq 1$, then $\varphi$ is quasi-complete intersection at $\n$.
\end{thmx}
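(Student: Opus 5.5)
The plan is to reduce to the case where $\varphi$ itself sits in a good quasi-deformation, and then compute with the long exact sequence in André–Quillen homology relating $\pi(R)$, $\pi(S)$ and the homotopy Lie coalgebra of $\varphi$ (\Cref{C_LESwithPi}). Recall that $\varphi$ is quasi-complete intersection at $\n$ exactly when $\pi_n(\varphi)=0$ for $n\geq 3$, equivalently $\D_n(S|R;\ell)=0$ for $n\ge 3$. Equivalently, the map $\pi_{n}(R)\otimes_k\ell\to\pi_n(S)$ is injective in high degrees, with $\pi_2(\varphi)$ controlled appropriately.

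\textbf{Step 1 (reduction).} Since $\uppercidim\varphi<\infty$, choose a factorization/quasi-deformation $R\to R'\to Q$ with $R\to R'$ flat with regular closed fiber, and a surjection $Q\to S'$ with kernel generated by a regular sequence, where $S\to S'$ is faithfully flat with regular closed fiber. I expect \Cref{T_UpperPiDimTheorem} to be exactly this statement phrased in terms of such diagrams. Flat maps with regular closed fiber do two things. First, they change the homotopy coalgebra only in degrees $\le 1$, since $\pi_{\ge2}$ of a regular ring vanishes. Second, they preserve both the $\pi$-large property in degree $2n+1\ge 3$ and the quasi-complete intersection property, by flat base change for $\D$. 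So we may replace $\varphi$ by a map of finite c.i.-type, i.e.\ $S$ is a quotient of a flat extension of $R$ by a regular sequence, up to these regular-fiber extensions.

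\textbf{Step 2 (degree comparison).} For a complete intersection map $Q\to S'$, $\pi_{\ge3}(S')$ agrees with $\pi_{\ge3}(Q)$ tensored up. We have $\pi_{2}$ extra, and $\pi_1$ shifts. The hypothesis that $\pi_{2n+1}(R)\otimes\ell\to\pi_{2n+1}(S)$ is surjective then forces the odd-degree piece of the relative coalgebra, $\pi_{2n+1}(\varphi)$ (or the cokernel term in the exact sequence), to vanish. By the rigidity results on André–Quillen homology of \Cref{C_hlca_def}, the vanishing of a single $\D_{2n+1}(S|R;\ell)$ with $n\geq1$ (Avramov's and Briggs' odd-degree rigidity for maps of finite c.i.\ dimension) implies $\D_i(S|R;\ell)=0$ for all $i\ge3$. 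That is precisely quasi-complete intersection at $\n$.

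\textbf{Main obstacle.} The hard part is Step 2. Surjectivity of $\varphi_{2n+1}$ only gives that the connecting map $\pi_{2n+1}(S)\to\pi_{2n}(\varphi)$-type term vanishes. Translating this into the vanishing of an actual André–Quillen module requires checking that the other adjacent map in the long exact sequence is zero or injective. I would first try to use that, in the quasi-deformation, $\pi_{\ge 3}(\varphi)$ is concentrated in even degrees (coming from a divided-power/Koszul model). If so, exactness in odd degree $2n+1$ would give vanishing of $\pi_{2n+2}(\varphi)$, and rigidity would propagate it to all degrees $\geq 3$.
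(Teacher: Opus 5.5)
There is a genuine gap. It is the one you flag yourself as the main obstacle, and the misstep that hides the way around it is in Step 1.

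In a quasi-deformation $R'\xrightarrow{h}R''\xleftarrow{f}Q$, the complete intersection map is the surjection $f\colon Q\to R''$ on the \emph{source} side. The ring $S''=\widehat S\otimes_{R'}R''$ is only assumed to have finite projective dimension over $Q$. It is not a c.i.\ quotient of a flat extension of $R$: if it were, $\varphi$ would be c.i.\ at $\n$, whereas the theorem only yields q.c.i., and that conclusion is sharp.

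As a result you run the long exact sequence of \Cref{C_LESwithPi} for $\varphi$ itself, where $\fdim{R}{S}$ may be infinite and the six-term sequences are unavailable. There, surjectivity of $\pi_{2n+1}(R)\otimes_k\ell\to\pi_{2n+1}(S)$ only says that $\wpi_{2n+1}(\varphi)\to\pi_{2n}(R)\otimes_k\ell$ is injective; no relative term is forced to vanish. Concretely, take $\varphi\colon R\to k$ with $R$ a singular complete intersection, as in \Cref{E_NontrivialExample}(1). Then $\uppercidim{\varphi}<\infty$ and $\pi_3(k)=0$, so $\varphi$ is $\pil{3}$, yet $\wpi_3(\varphi)\cong\pi_2(R)\neq 0$. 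This refutes the inference in your Step 2 for $n=1$. It also refutes your proposed fix, since $\wpi_{\geq 3}(\varphi)$ need not be concentrated in even degrees.

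The rigidity you cite is not available in the form you state either. The Avramov--Iyengar and Briggs--Iyengar theorems need finite flat dimension. For finite c.i.\ dimension the statement is the open question at the end of the paper. For finite upper c.i.\ dimension it is \Cref{P_QuillenConjecture}, which is proved by the same detour described below. In any case you never produce a vanishing Andr\'e--Quillen module to feed into it. Your dictionary is also shifted by one: c.i.\ at $\n$ means $\wpi_{\geq 3}(\varphi)=0$, q.c.i.\ means $\wpi_{\geq 4}(\varphi)=0$, and in characteristic zero $\pi_{2n+1}(\varphi)$ corresponds to $\D_{2n}$, not $\D_{2n+1}$.

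The missing idea is to transport the hypothesis to $gf\colon Q\to S''$, which has finite flat dimension, and to obtain q.c.i.\ from a two-out-of-three property rather than from rigidity for $\varphi$. The steps are as follows.

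Since $f$ is c.i., $\wpi_{\geq 3}(f)=0$, so $f$ is $\pil{m}$ for all $m\geq 3$. The maps $R\to R''$ and $S\to S''$ are weakly regular, so they do not change $\pi_{\geq 2}$ (\Cref{C_PiWeaklyRegular}). Hence $\varphi$ being $\pil{2n+1}$ makes $gf$ $\pil{2n+1}$ (\Cref{L_LongLemma}).

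For $gf$ the six-term sequences do exist, so the cokernel of $\pi_{2n+1}(Q)\to\pi_{2n+1}(S'')$ is exactly $\pi_{2n+1}(gf)$. Thus $\pi_{2n+1}(gf)=0$, and Avramov--Iyengar rigidity makes $gf$ c.i.\ (\Cref{L_PiLargeCI}).

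Only at this point does c.i.\ drop to q.c.i. Since $f$ and $gf$ are both c.i., $g$ is q.c.i.\ by \cite[8.9.1]{Avramov/Henriques/Sega:2013}. This descends to $\varphi$ along the weakly regular maps by \cite[8.7]{Avramov/Henriques/Sega:2013}.
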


When $\varphi$ has finite flat dimension, the condition of being $\pil{2n+1}$ is equivalent to $\varphi$ being complete intersection at $\n$ by \Cref{L_PiLargeCI}. In \Cref{E_NontrivialExample} we give classes of surjective large homomorphisms with infinite projective dimension and finite upper complete intersection dimension, and thus providing examples of homomorphisms satisfying the assumptions in \Cref{T_MainTheorem} that are not complete intersection at $\n$. 

In \cite[Theorem 33]{Briggs:2018}, Briggs showed that $\varphi$ is quasi-complete intersection at $\n$ if and only if $\varphi_{n}$ is an isomorphism for $n\geq 3$, and in \cite[Question on page 85]{Briggs:2018} asked must $\varphi$ be quasi-complete intersection at $\n$ if $\varphi_{n}$ is an isomorphism for $n\gg 0$? When $\uppercidim{\varphi}$ is finite, \Cref{T_MainTheorem} says that the answer to Briggs' question is yes. Since $\varphi_{n}$ is an isomorphism for $n\gg 0$ if and only if $\wpi_{n}(\varphi) = 0$ for $n\gg 0$, where $\wpi_{n}(\varphi)$ is a variant of the homotopy lie coalgebra defined in \cite{Briggs:2018}, this question of Briggs can be thought of as a homotopy lie coalgebra analog of a conjecture of Quillen, which we now explain.

Let $\daq{n}{S}{R}{-}$ denote the $nth$ Andr\'e-Quillen homology functor on the category of $S$-modules, defined by Andr\'e \cite{Andre:1971} and Quillen \cite{Quillen:1970}. In \cite[Conjecture 5.6]{Quillen:1970} Quillen conjectured that if $\varphi$ is of finite type, then $\daq{n}{S}{R}{-}=0$ for $n\gg0$ implies $\daq{n}{S}{R}{-}=0$ for $n\geq 3$. This was also conjectured by Avramov in \cite[pg. 459]{Avramov:1999} for maps not necessarily of finite type. In other words, $\varphi$ is quasi-complete intersection when the Andr\'e-Quillen homology functors eventually vanish. We note here that Quillen's conjecture is known to hold when $S$ is an algebra retract of $R$ \cite{Avramov/Iyengar:2003}, when $\varphi$ is onto and $S$ has finite complete intersection dimension over $R$ \cite[Proposition 10]{Soto:2000}, and when $S$ has finite flat dimension over $R$ \cite{Avramov:1999}. Using \cite{Briggs/Iyengar:2020}, it can be shown that if $\uppercidim{\varphi}$ is finite, then $\daq{n}{S}{R}{\ell}$ being zero for some $n\geq 2$ implies $\varphi$ is quasi-complete intersection at $\n$ which establishes the conjecture of Quillen  when $\varphi$ is of finite type and has finite upper c.i. dimension (see \Cref{P_QuillenConjecture}).

\section*{Acknowledgments}
We would like to thank Benjamin Briggs, Daniel McCormick, and Josh Pollitz for the many helpful conversations and suggestions throughout
this project. We would also like to thank Lars Christensen for his comments on an earlier draft.

\section{Background}
In this section, we review necessary background for the paper. Throughout this secion, $\varphi\colon(R,\m,k)\rightarrow (S,\n,\ell)$ is a local homomorphism of noetherian local rings. 
\begin{chunk} 
    Let $\mathfrak{p}$ be a prime ideal of $R$. The ring $k(\mathfrak{p})\otimes_{R}S$ is the fiber of $\varphi$ at $\mathfrak{p}$. The fibers of the completion map $R\rightarrow \widehat{R}$ are called the formal fibers of $R$. The closed fiber of $\varphi$ is $S/\mathfrak{m}S$.
\end{chunk}

\begin{chunk}
    We now recall the definition of a Cohen factorization. A local homomorphism is called \textit{weakly regular} if it is flat with regular closed fiber. By \cite[Theorem 1.1]{Avramov/Foxby/Herzog:1994} the composition of $\varphi$ with the completion map $S\rightarrow \widehat{S}$ admits a \textit{Cohen factorization}
    \[R\xrightarrow{\dot{\varphi}} R'\xrightarrow{\varphi'} \widehat{S},\]
    where $\dot{\varphi}$ is weakly regular, $R'$ complete, and $\varphi'$ surjective. Cohen factorizations reduce the problem of studying ring homomorphisms to only considering surjections and flat homomorphisms with regular closed fiber.
\end{chunk}

\begin{chunk}
    Assume $\widehat R \cong Q /I$ where $Q$ is a regular local ring and $I \subseteq \m_Q^2$, such a $Q$ and $I$ exist by Cohen's structure theorem. A \textit{minimal Cohen model of $R$} is a semi-free extension $Q \to Q[X]$ such that $Q[X] \to \widehat R$ is a surjective quasi-isomorphism and $\partial(\m_{Q[X]}) \subseteq \m_Q+\m_{Q[X]}^2$. These always exist (see \cite[Section 7.2]{Avramov:2010}).

    Let $R \to R' \to \widehat S$ be a Cohen factorization of $\varphi$. By \cite[Section 7.2]{Avramov:2010}, there exists a semi-free extension $R' \to R'[X]$ such that $R'[X] \to \widehat S$ is a surjective quasi-isomorphism and $\partial(\m_{R'[X]}) \subseteq \m_{R'}+\m_{R'[X]}^2$. The factorization $R \to R'[X] \to \widehat S$ is called the \textit{minimal Cohen model for $\varphi$}.
\end{chunk}

\begin{chunk}\label{C_hlca_def}
    Assume $\widehat R \cong Q/I$, where $Q$ is a regular local ring and $I \subseteq \m_Q^2$. Let $Q[X] \to \widehat R$ be a minimal Cohen model. The \textit{homotopy lie coalgebra} of $R$ is defined to be
    \[
         \pi_*(R) := \pi_*(Q[X]) =\Sigma (\m_{Q[X]}/\m^2_{Q[X]}).
    \]
\end{chunk}

\begin{chunk} \label{C_LESwithPi}
    Let $R \to R'[X] \rightarrow \widehat S$ be a minimal Cohen model for $\varphi$. The \textit{homotopy lie coalgebra} of $\varphi$ is defined to be
    \[
        \pi_*(\varphi) := \pi_*(A) = \Sigma(\m_A/\m_A^2)
    \]
    where  $A = k \otimes _R R'[X]$. Moreover, there is an induced map, $\pi_*(R) \otimes_k \ell \to \pi_*(S)$, and following the notation of \cite{Briggs:2018}, we set
    \[
         \wpi_*(\varphi) := H_*\left(\cone{\pi_*(R) \otimes_k \ell \to \pi_*(S)}\right).
    \]
     From this we get a long exact sequence of $\ell$ vector spaces \cite[(3.4)]{Briggs:2018}
     \[
        \cdots \to  \wpi_{n+1}(\varphi) \to \pi_n(R)\otimes_k \ell \to \pi_n(S) \to  \wpi_n(\varphi) \to \pi_{n-1}(R)\otimes_k \ell \to \cdots.
     \]
\end{chunk}

\begin{chunk} \label{c_pi-facts}
    We now collect some facts about $\wpi_*(\varphi)$.
    \begin{enumerate}
        \item There is a natural map $\pi_*(\varphi) \to \wpi_*(\varphi)$, and when $\fdim{R}{S}$ is finite, the map is an isomorphism \cite[Theorem 30]{Briggs:2018}.
        \item If $\fdim{R}{S}$ is finite then, we obtain six term exact sequences \cite[Theorem 25 and (3.2)]{Briggs:2018}
        \[
        \begin{tikzcd}
            0 \ar[r] & \pi_{2i}(R)\otimes_k \ell \ar[r] & \pi_{2i}(S) \ar[r] \ar[d, phantom, ""{coordinate, name=Z}] & \pi_{2i}(\varphi) \ar[dll, rounded corners, to path={--([xshift=2ex]\tikztostart.east) |-(Z) \tikztonodes-|([xshift=-2ex]\tikztotarget.west)--(\tikztotarget)}] \\
            & \pi_{2i-1}(R) \otimes_k \ell \ar[r] & \pi_{2i-1}(S) \ar[r] & \pi_{2i-1}(\varphi) \ar[r] & 0.
        \end{tikzcd}
        \]
        \item By \cite[Theorem 32]{Briggs:2018}, this measures the singularity of $\varphi$ at the maximal ideal of $S$.
        \begin{enumerate}
            \item $\varphi$ is weakly regular if and only if $\wpi_{\geq2}(\varphi)=0$
            \item $\varphi$ is c.i. at $\n$ if and only if $\wpi_{\geq3}(\varphi)=0$
            \item $\varphi$ is q.c.i. at $\n$ (see \Cref{c_qci-def}) if and only if $\wpi_{\geq4}(\varphi)=0$.
        \end{enumerate}
    \end{enumerate}
\end{chunk}

\begin{chunk} \label{C_PiWeaklyRegular}
    If $\dot\varphi \colon (R, \m ,k) \to (R', \n, \ell)$ is weakly regular, then $\pi_n(R) \otimes_k \ell \cong \pi_n(R')$ for all $n \geq 2$. Indeed, $\pi_n(\varphi)=0$ for all $n \geq 2$ by \cite[Proposition 2.6]{Avramov/Iyengar:2003}, so the claim follows by \Cref{c_pi-facts}(2) as $R'$ has finite flat dimension over $R$.
\end{chunk}

\section{Definitions and Basic Properties} \label{Section:DefinitionsAndBasicProperties}
In this section, we define $\pil{n}$ homomorphisms and prove facts we use throughout the paper.
\begin{definition} Let $\varphi\colon (R,\m,k)\rightarrow (S,\n,\ell)$ be a local homomorphism. We say $\varphi$ is $\pil{n}$ if the natural map
\[\pi_{n}(R)\otimes_{k}\ell\rightarrow \pi_{n}(S)\]
is surjective.
\end{definition}

As mentioned in the introduction, every large homomorphism is $\pil{n}$ for all $n$. The following example gives a homomorphism that is not large, but is $\pil{n}$ for almost all $n$.
\begin{example}
    Let $R = k[[x,y]]/(x^3,x^2y)$ and $S= k[[x,y]]/(x^2,y^2)$. The map $\varphi\colon R \twoheadrightarrow S$ is $\pil{n}$ for $n\neq 2$. Indeed, since $S$ is complete intersection, $\pi_n(S)=0$ for $n\geq 3$ (see \cite[Section 7.3 and Theorem 10.2.1]{Avramov:2010}) and thus $\varphi$ is $\pil{n}$ for $n \geq 3$. Lastly, one can check that $\pi_1(R) \to \pi_1(S)$ is an isomorphism, and $\pi_2(R) \to \pi_2(S)$ is the zero map with $\pi_2(S) \neq 0$. Indeed, the map on $\pi_1$ maps between the minimal generators of the maximal ideals of $R$ and $S$ respectively, and the map on $\pi_2$ maps between the minimal generators of the ideals defining $R$ and $S$ respectively. 
\end{example}

\begin{chunk} We now give the definition of locally complete intersection maps \cite{Avramov:1999}. Let $\varphi\colon R\rightarrow S$ be a local homomorphism.
\begin{enumerate}
\item Let
\[R\xrightarrow{\dot{\varphi}} R'\xrightarrow{\varphi'} \widehat{S},\]
be a Cohen factorization of $\varphi$. The map $\varphi$ is \textit{complete intersection} (c.i.) if $\ker{\varphi'}$ is generated by a regular sequence. By \cite[Remark 3.3]{Avramov:1999} this property does not depend on the choice of Cohen factorization.
\item The map $\varphi$ is c.i. at a prime ideal $\mathfrak{q}$ in $S$ if the localization $\varphi_{\mathfrak{q}}\colon R_{\mathfrak{p}}\rightarrow S_{\mathfrak{q}}$ is c.i. where $\mathfrak{p}=\varphi^{-1}(\mathfrak{q})$, and $\varphi$ is \textit{locally c.i.} if $\varphi$ is c.i. at every prime ideal $\mathfrak{q}$ in $S$. 
\end{enumerate}
\end{chunk}

\begin{lemma} \label{L_PiLargeCI} Let $\varphi\colon R\rightarrow (S,\n,\ell)$ be a local homomorphism. Consider the following conditions
\begin{enumerate}
\item $\varphi$ is locally complete intersection.
\item $\fdim{R}{S}<\infty$ and $\varphi$ is $\pil{n}$ for all $n\geq 3$.
\item $\fdim{R}{S}<\infty$ and $\varphi$ is $\pil{2n+1}$ for some $n\geq 1$.
\item $\varphi$ is complete intersection at $\n$.
\end{enumerate}
Then $1\implies 2\iff 3\iff 4$.
\end{lemma}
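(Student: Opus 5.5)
Throughout we work with the six-term exact sequences of \Cref{c_pi-facts}(2), available whenever $\fdim{R}{S}<\infty$, together with the characterization \Cref{c_pi-facts}(3)(b): $\varphi$ is c.i. at $\n$ if and only if $\wpi_{\geq 3}(\varphi)=0$. By \Cref{c_pi-facts}(1), $\wpi_*(\varphi)\cong\pi_*(\varphi)$ when the flat dimension is finite. The proof proceeds in four steps.

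First, $1\implies 4$ is immediate. A locally c.i. map is c.i. at every prime, in particular at $\n$, and $\varphi_{\n}=\varphi$ since $S$ is local.

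Second, $4\implies 2$. If $\varphi$ is c.i. at $\n$, then in a Cohen factorization $R\to R'\to\widehat S$ the map $R\to R'$ is flat, and $\widehat S$ is a quotient of $R'$ by a regular sequence. Hence $\fdim{R}{\widehat S}<\infty$. Since $S\to\widehat S$ is faithfully flat, $\fdim{R}{S}=\fdim{R}{\widehat S}<\infty$; this is standard and is how c.i. maps have finite flat dimension in \cite{Avramov:1999}. By \Cref{c_pi-facts}(3)(b), $\wpi_n(\varphi)=0$ for $n\geq 3$. The long exact sequence of \Cref{C_LESwithPi} then shows that $\pi_n(R)\otimes_k\ell\to\pi_n(S)$ is surjective for all $n\geq 3$, because its cokernel injects into $\wpi_n(\varphi)$.

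Third, $2\implies 3$ is trivial: take $n=1$, so that $2n+1=3$.

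Fourth, $3\implies 4$, the main step. Suppose $\fdim{R}{S}<\infty$ and $\pi_{2n+1}(R)\otimes_k\ell\to\pi_{2n+1}(S)$ is surjective for some $n\geq 1$. In the six-term sequence with $i=n+1$, the tail reads
\[
\pi_{2n+1}(R)\otimes_k\ell\to\pi_{2n+1}(S)\to\pi_{2n+1}(\varphi)\to 0.
\]
Surjectivity of the first map therefore gives $\pi_{2n+1}(\varphi)=0$. The main obstacle is to propagate this single odd vanishing to all of $\pi_{\geq 3}(\varphi)$. For this I would invoke rigidity of the homotopy Lie algebra of the flat-dimension-finite map, i.e.\ of $A=k\otimes_R R'[X]$, which is the minimal model of the derived fiber. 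The result I have in mind is Avramov's theorem \cite{Avramov:1999}: if $\fdim{R}{S}<\infty$ and $\pi_{m}(\varphi)=0$ (equivalently $\daq{m-1}{S}{R}{\ell}=0$, or the vanishing of the corresponding deviation) for some $m\geq 3$, then $\varphi$ is c.i. at $\n$. With $m=2n+1\geq 3$ this gives the conclusion directly. Alternatively, one can argue from Briggs' theorem \cite{Briggs:2018} that $\pi^*(\varphi)$ is a graded Lie algebra whose odd part vanishing in one degree $\geq 3$ forces vanishing in all degrees $\geq 3$, via the argument that a nonzero element of degree $\geq 3$ generates a free Lie subalgebra, by the Halperin-type rigidity used in \cite{Briggs:2018}. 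Either route yields $\pi_{\geq 3}(\varphi)=0$, and \Cref{c_pi-facts}(1) and (3)(b) then give that $\varphi$ is c.i. at $\n$. I expect that pinning down the precise citation for the rigidity statement, rather than any computation, will be the delicate point.
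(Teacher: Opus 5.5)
Your proposal is correct and follows the paper's proof. For $3\implies 4$ the paper likewise reads off $\pi_{2n+1}(\varphi)=0$ from the six-term sequence and then applies a rigidity theorem, taken from \cite[Corollary 5.5]{Avramov/Iyengar:2003}; your alternative Lie-algebra sketch is not needed, and identifying $\pi_m(\varphi)$ with $\daq{m-1}{S}{R}{\ell}$ requires $\Char\ell=0$ in general. For $4\implies 2$ the paper uses \cite[2.8]{Avramov/Iyengar:2003} and \cite[Lemma 3.2]{Avramov/Foxby/Herzog:1994} before the six-term sequences, whereas your route through \Cref{c_pi-facts}(3)(b) and the long exact sequence of \Cref{C_LESwithPi} is an equally valid and slightly more self-contained variant.
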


\begin{proof}
The fact that $(1)$ implies $(4)$ is by definition, so we just need to show the equivalence of $(2)$, $(3)$, and $(4)$. It is clear that $(2)$ implies $(3)$, so we now show $(3)$ implies $(4)$.

Assume $\fdim{R}{S} < \infty$ and $\varphi$ is $\pil{2n+1}$ for some $n \geq 1$. As the flat dimension of $S$ over $R$ is finite, by \Cref{c_pi-facts}, we have exact sequences for all $i \geq 0$,
\[
\begin{tikzcd}  
         \pi_{2i+1}(R) \otimes_k \ell \ar[r] & \pi_{2i+1}(S) \ar[r] & \pi_{2i+1}(\varphi) \ar[r] & 0.
    \end{tikzcd}
\]
As $\varphi$ is $\pil{2n+1}$, we have $\pi_{2n+1}(\varphi)=0$. Hence by \cite[Corollary 5.5]{Avramov/Iyengar:2003}, $\varphi$ is c.i. at $\n$.

Lastly, we show $(4)$ implies $(2)$. As $\varphi$ is c.i. at $\n$, the equality $\pi_n(\varphi)=0$ holds for all $n \geq 3$ by \cite[2.8]{Avramov/Iyengar:2003}. Moreover, since $\varphi$ is c.i., $S$ has finite flat dimension over $R$ by \cite[Lemma 3.2]{Avramov/Foxby/Herzog:1994} so by \Cref{c_pi-facts}, $\varphi$ is $\pil{n}$ for $n \geq 3$.
\end{proof}

\begin{remark}
By \cite[Lemma 5.12.1]{Avramov:1999} when the formal fibers of $R$ are locally c.i., then condition $4$ implies condition $1$ so all four conditions in the lemma are equivalent. However, in \cite[Remarque 3.2.1]{Ferrand/Raynaud:1970} the authors give an example that is c.i. at $\n$ but not locally c.i., so in general the four statements in the lemma are not all equivalent.
\end{remark}

\section{Main Result} \label{Section:UpperAQDim}
In this section we prove \Cref{T_MainTheorem}. We first recall the definition of maps with finite upper c.i. dimension \cite{Majadas:2016} and \cite{Takahashi:2004}.
\begin{chunk} \label{uppercidimension} A local map $\varphi\colon R\rightarrow S$ has \textit{finite upper c.i. dimension}, denoted by $\uppercidim{\varphi}<\infty$, if there exists a Cohen factorization
\[R\rightarrow R'\rightarrow \widehat{S}\]
and a diagram $R'\xrightarrow{h} R''\xleftarrow{f} Q$ of local homomorphisms so that $h$ is weakly regular, $f$ is surjective, $\ker{f}$ is c.i., and $\pdim{Q}{\widehat{S}\otimes_{R'}R''}$ is finite. Such a diagram is called a \textit{quasi-deformation}.  Note that if $\pdim{R'}{\widehat{S}}<\infty$, then $\uppercidim_{R'}\widehat{S}<\infty$. In particular, if $\fdim{R}{S}<\infty$, then $\uppercidim{\varphi}<\infty$ by \cite[Lemma 3.2]{Avramov/Foxby/Herzog:1994}.
\end{chunk}

We next prove the following lemma, which allows us to pass the property of being $\pil{n}$ through quasideformations.

\begin{lemma} \label{L_LongLemma} Let $\varphi\colon R\rightarrow S$ be a local homomorphism. Let
\[R\rightarrow R'\xrightarrow{\varphi'} \widehat{S}\]
be a Cohen factorization of $\varphi$ and assume there is a diagram
\[R'\rightarrow R''\leftarrow Q\]
of local homomorphisms with the first map weakly regular and the second map onto. Consider the following diagram
\[\begin{tikzcd}
	&& {R'} && {\widehat{S}} \\
	Q && {R''} && {\widehat{S}\otimes_{R'}R''}
	\arrow["{\varphi'}", from=1-3, to=1-5]
	\arrow[from=1-3, to=2-3]
	\arrow[from=1-5, to=2-5]
	\arrow["f", from=2-1, to=2-3]
	\arrow["g", from=2-3, to=2-5]
\end{tikzcd}\]
If $f$ is $\pil{n}$ for some $n \geq 2$, then $\varphi$ is $\pil{n}$ if and only if $gf$ is $\pil{n}$.
\end{lemma}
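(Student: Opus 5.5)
We compare $\pi_n$ of the relevant rings around the square, using naturality of the maps $\pi_n(-)\otimes\ell\to\pi_n(-)$ and \Cref{C_PiWeaklyRegular} to identify the vertical arrows with isomorphisms. Write $k,k',\ell,\ell''$ for the residue fields of $R$, $R'$, $\widehat S$ and $\widehat S\otimes_{R'}R''$ respectively. Since $\varphi'$ is surjective, its residue field extension is trivial, so the residue field of $R'$ is $\ell$. The map $\widehat S\to \widehat S\otimes_{R'}R''$ is flat, being a base change of the flat map $R'\to R''$. Its closed fiber is $\widehat S\otimes_{R'}(R''/\m_{R'}R'')$, a quotient of the regular closed fiber of $R'\to R''$ by the image of the maximal ideal of $\widehat S$. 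For this step the cleanest route is to work with residue fields: $\widehat S\otimes_{R'}R''\otimes_{\widehat S}\ell = R''/\m_{R'}R''$, which is regular. Hence the right vertical map is weakly regular. Note also that the residue field of $R''$ equals that of $\widehat S\otimes_{R'}R''$, namely $\ell''$, since $g$ is surjective.

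First reduce $\varphi$ to $\varphi'$. The map $R\to R'$ is weakly regular, so by \Cref{C_PiWeaklyRegular} we have $\pi_n(R)\otimes_k\ell\cong\pi_n(R')$ for $n\ge 2$. Moreover $\pi_n(S)=\pi_n(\widehat S)$, since $\pi_*$ is defined via the completion. The map $\pi_n(R)\otimes_k\ell\to\pi_n(S)$ factors through this isomorphism, by functoriality of $\pi_*$ along the factorization. Therefore $\varphi$ is $\pil{n}$ if and only if $\varphi'$ is $\pil{n}$, for $n\ge2$.

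Next, tensor the commutative square of $\pi_n$'s with $\ell''$. This gives
\[
\begin{tikzcd}
\pi_n(R')\otimes_{\ell}\ell'' \ar[r]\ar[d,"\cong"] & \pi_n(\widehat S)\otimes_\ell\ell''\ar[d,"\cong"]\\
\pi_n(R'')\ar[r,"g_n"] & \pi_n(\widehat S\otimes_{R'}R'')
\end{tikzcd}
\]
where both vertical maps are isomorphisms by \Cref{C_PiWeaklyRegular}, since $n\ge 2$ and both vertical ring maps are weakly regular. Because $\ell\to\ell''$ is faithfully flat, the top map is surjective if and only if $\varphi'$ is $\pil{n}$. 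Hence $\varphi'$ is $\pil{n}$ if and only if $g$ is $\pil{n}$.

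Finally, compare $g$ with $gf$. The map $(gf)_n\colon\pi_n(Q)\otimes\ell''\to\pi_n(\widehat S\otimes_{R'}R'')$ factors as $g_n\circ(f_n)$. Here $f_n\colon \pi_n(Q)\otimes\ell''\to\pi_n(R'')$ is surjective by hypothesis, since $f$ is onto and its residue fields agree. A composite with a surjective first map is surjective if and only if the second map is. Thus $gf$ is $\pil{n}$ if and only if $g$ is, which combined with the previous steps proves the lemma.

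The main obstacle is the naturality bookkeeping. We must justify that the maps $\pi_n(-)\otimes\ell\to\pi_n(-)$ are functorial, so that the squares commute, which requires compatible minimal Cohen models for a composite of maps. We must also check carefully that the right vertical map is weakly regular. I would handle functoriality by citing Briggs's construction of the natural map and its compatibility with composition.
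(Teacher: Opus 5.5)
Your proof is correct and is essentially the paper's argument. The paper uses a single square with the composite weakly regular maps $R\to R''$ and $S\to\widehat S\otimes_{R'}R''$ (citing Avramov--Foxby for composition of weakly regular maps) together with \Cref{C_PiWeaklyRegular}; you split this into two steps, through $R'$ and $\widehat S$, verifying directly via the residue-field computation that the base change $\widehat S\to\widehat S\otimes_{R'}R''$ is weakly regular, and then finish with the same observation that $f_n$ is surjective.
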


\begin{proof}
Let $k$ be the residue field of $R$, $\ell$ the residue field of $S$, and $L$ the residue field of $Q$. Since $R \to R'$ and $R' \to R''$ are both weakly regular, the composite $R \to R' \to R''$ is also weakly regular \cite[construction 4.4]{Avramov/Foxby:1998}. Similarly, $S \to \widehat S \otimes_{R'} R''$ is weakly regular as well. Consider the following diagram induced by the functoriality of $\pi_n(-)$,
\[
\begin{tikzcd}
    & \pi_n(R) \otimes_k L \ar[r] \ar[d] & \pi_n(S) \otimes_{\ell} L \ar[d]\\
    \pi_n(Q) \ar[r]  & \pi_n(R'') \ar[r] & \pi_n(\widehat S \otimes_{R'}R'').
\end{tikzcd}
\]

Since $R \to R''$ and $S \to \widehat S \otimes_{R'}R''$ are weakly regular, the vertical maps are isomorphisms for all $n \geq 2$ by \Cref{C_PiWeaklyRegular}. Hence, $\varphi$ is $\pil{n}$ if and only if $g$ is $\pil{n}$. Assuming $f$ is $\pil{n}$, it follows that $gf$ is $\pil{n}$ if and only if $\varphi$ is $\pil{n}$.
\end{proof}

Before stating the theorem, we recall some definitions.
\begin{chunk} André-Quillen homology was introduced independently by André \cite{Andre:1971} and Quillen \cite{Quillen:1970}. For a ring homomorphism $\varphi\colon R\rightarrow S$ one can attach to it a complex $\lco_{S|R}$, known as the cotangent complex, and given an $S$-module $M$ the $n$th André-Quillen homology module of $S$ over $R$ with coefficients in $M$ is
\[\daq{n}{S}{R}{M} := \h[n]{\lco_{S|R}\otimes_{S}M}.\]
\end{chunk}

\begin{chunk}\label{c_qci-def} We now give the definition of \textit{quasi-complete intersection} (q.c.i) maps. See \cite[Section 1]{Avramov/Henriques/Sega:2013} for a discussion of q.c.i ideals. For an ideal $I$ in a noetherian local ring $R$, the ideal $I$ is q.c.i. if and only if $\D_{n}(R/I|R;-)=0$ for $n\geq 3$ by \cite[Corollary 3']{Blanco/Majadas/Rodicio:1998}. Let $\varphi\colon R\rightarrow (S,\mathfrak{n},\ell)$ be a local homomorphism. The map $\varphi$ is q.c.i at $\mathfrak{n}$ if in some Cohen factorization
\[R\rightarrow R'\xrightarrow{\varphi'} \widehat{S}\]
the ideal $\ker{\varphi'}$ is q.c.i. By \cite[Lemma 7.2]{Avramov/Henriques/Sega:2013} this property does not depend on the choice of Cohen factorization. 
\end{chunk}

\begin{theorem} \label{T_UpperPiDimTheorem} Let $\varphi\colon R\rightarrow (S,\n,\ell)$ be a local homomorphism with $\uppercidim{\varphi}$ finite. If $\varphi$ is $\pil{2n+1}$ for some $n\geq 1$, then $\varphi$ is quasi-complete intersection at $\n$.
\end{theorem}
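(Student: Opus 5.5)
Fix a quasi-deformation witnessing $\uppercidim{\varphi}<\infty$: a Cohen factorization $R\to R'\xrightarrow{\varphi'}\widehat S$ and $R'\xrightarrow{h}R''\xleftarrow{f}Q$ with $h$ weakly regular, $f$ surjective with c.i. kernel, and $\pdim{Q}{(\widehat S\otimes_{R'}R'')}<\infty$. This is exactly the setup of \Cref{L_LongLemma}, with $g\colon R''\to \widehat S\otimes_{R'}R''$. The strategy is to transfer $\pi$-largeness from $\varphi$ to the composite $gf\colon Q\to \widehat S\otimes_{R'}R''$. Since $gf$ has finite flat dimension, \Cref{L_PiLargeCI} then applies to it. Finally we pull the conclusion back to $\varphi$.

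\textbf{Step 1 ($f$ is $\pil{2n+1}$).} Since $\ker f$ is generated by a regular sequence, $f$ is c.i. (surjective, so it is its own Cohen factorization after completing). It has finite flat dimension, so by \Cref{L_PiLargeCI} ((4)$\Rightarrow$(2)) $f$ is $\pil{m}$ for all $m\ge 3$. In particular it is $\pil{2n+1}$, since $2n+1\ge 3$.

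\textbf{Step 2 ($gf$ is c.i.).} By \Cref{L_LongLemma}, $\varphi$ being $\pil{2n+1}$ implies $gf$ is $\pil{2n+1}$. The ring $\widehat S\otimes_{R'}R''$ has finite projective dimension over $Q$, so \Cref{L_PiLargeCI} ((3)$\Rightarrow$(4)) shows $gf$ is c.i. at the maximal ideal. As $gf$ is surjective, its kernel $J$ is generated by a regular sequence. Thus \Cref{c_pi-facts}(3) gives $\wpi_{\ge 3}(gf)=0$, and correspondingly $\D_{m}(\widehat S\otimes_{R'}R''|Q;-)=0$ for $m\ge 2$.

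\textbf{Step 3 (descent to $\varphi$).} This is the main obstacle: we must conclude that $\ker(g)=(\ker\varphi')R''$ is q.c.i. Flat base change along $h$ gives $\D_m(\widehat S\otimes_{R'}R''|R'';-)\cong \D_m(\widehat S|R';-)\otimes$, and q.c.i. descends along faithfully flat maps. So it suffices to show that $\ker g=J/\ker f$ is q.c.i. in $R''=Q/\ker f$.

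For this I would use the Jacobi–Zariski sequence for $Q\to R''\to T:=\widehat S\otimes_{R'}R''$ with coefficients in an arbitrary $T$-module $M$:
\[
\D_{m}(T|Q;M)\to \D_m(T|R'';M)\to \D_{m-1}(R''|Q;M)\to \D_{m-1}(T|Q;M).
\]
Both $Q\to R''$ and $Q\to T$ are c.i. surjections, so $\D_{\ge 2}$ vanishes for each. The sequence then yields $\D_m(T|R'';M)=0$ for $m\ge 3$. By \cite[Corollary 3']{Blanco/Majadas/Rodicio:1998}, $\ker g$ is q.c.i.

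Descending through the faithfully flat $h$ (vanishing of $\D_{\ge3}(\widehat S|R';-)$ is detected after flat base change, via the residue field), $\ker\varphi'$ is q.c.i. Hence $\varphi$ is q.c.i. at $\n$ by \Cref{c_qci-def}. Alternatively, one can phrase this step via \Cref{c_pi-facts}(3)(c), using the long exact sequence in $\wpi$ to show $\wpi_{\ge4}(\varphi)=0$. The delicate points are identifying the vertical maps in the flat base change and checking that q.c.i. descends along $h$.
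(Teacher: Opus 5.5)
Your proposal is correct and follows the paper's proof: Steps 1 and 2 are exactly the paper's use of \Cref{L_PiLargeCI} and \Cref{L_LongLemma} to show that $gf$ is c.i. Your Step 3 simply unpacks the results \cite[8.9.1 and 8.7]{Avramov/Henriques/Sega:2013} that the paper cites to go from $gf$ c.i.\ to $g$ q.c.i.\ to $\varphi$ q.c.i.\ at $\n$; it does this via Jacobi--Zariski for $Q\to R''\to \widehat S\otimes_{R'}R''$ together with \cite[Corollary 3']{Blanco/Majadas/Rodicio:1998}, followed by flat base change and descent along $h$.
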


\begin{proof}
Consider the following diagram witnessing the finiteness of $\uppercidim{\varphi}$:
\[\begin{tikzcd}
	&& {R'} && {\widehat{S}} \\
	Q && {R''} && {\widehat{S}\otimes_{R'}R''.}
	\arrow["{\varphi'}", from=1-3, to=1-5]
	\arrow["h", from=1-3, to=2-3]
	\arrow[from=1-5, to=2-5]
	\arrow["f", from=2-1, to=2-3]
	\arrow["g", from=2-3, to=2-5]
\end{tikzcd}\]
By \Cref{c_pi-facts}, $\uppercidim{\varphi}<\infty$ implies $\wpi_{n}(f)=0$ for $n\geq 3$. Thus, $f$ is $\pil{n}$ for $n \geq 3$ by \Cref{C_LESwithPi}. Hence, $gf$ is c.i. by \Cref{L_PiLargeCI} and \Cref{L_LongLemma}. Therefore, if $\uppercidim{\varphi} < \infty$, then $g$ is q.c.i. by \cite[8.9.1]{Avramov/Henriques/Sega:2013}, which implies that $\varphi$ is q.c.i. at $\n$ by \cite[8.7]{Avramov/Henriques/Sega:2013}.
\end{proof}

We should point out the following.
\begin{remark} Let $(R,\m,k)$ be a noetherian local ring, $I$ be an ideal, and $\varphi\colon R\rightarrow R/I$ be the quotient map. Consider the following conditions.
\begin{enumerate}
    \item $\varphi$ is q.c.i. and $I\cap \m^{2}=\m I$.
    \item $\varphi$ is large.
\end{enumerate}
Then 1 implies 2, and 2 implies 1 if $\uppercidim{\varphi}$ is finite.

The first implication is by \cite[Theorem 6.2]{Avramov/Henriques/Sega:2013} and \cite[Example 2.4.4]{Gheibi/Takahashi:2021}. If $\uppercidim{\varphi}$ is finite and 2 holds, then 1 holds by \Cref{T_UpperPiDimTheorem} and \cite[2.3]{Gheibi/Takahashi:2021}.
\end{remark}

\Cref{E_NontrivialExample} gives classes of surjective large homomorphisms with infinite projective dimension and finite upper complete intersection dimension, and thus providing examples of homomorphisms satisfying the assumptions in \Cref{T_UpperPiDimTheorem} that are not c.i. at $\n$. As mentioned in the introduction by \cite[Example 7]{Briggs:2018}, if a homomorphism is large, then it is $\pil{n}$ for all $n$. Before getting to the example, we first need the following lemma.
\begin{lemma} \label{L_LemmaForNontrivialExample} Let $(R,\m,k)$ be a local complete intersection and $I$ a nonzero ideal so that $I\cap \m^{2}=\m I$. If $I$ is not generated by a regular sequence, then $R/I$ has infinite projective dimension and $\uppercidim{\varphi}$ is finite, where $\varphi$ is the quotient map $R\rightarrow R/I$.
\end{lemma}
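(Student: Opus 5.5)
Two things must be shown: that $\pdim{R}{R/I}=\infty$, and that $\uppercidim{\varphi}<\infty$. I would obtain the first from the structure of $\pi_*$ and the vanishing results already quoted, and the second by building an explicit quasi-deformation.

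\textbf{Infinite projective dimension.} I argue by contradiction and suppose $\pdim{R}{R/I}<\infty$, so that $\fdim{R}{R/I}<\infty$. The key input is that $I\cap\m^2=\m I$ makes $\varphi$ large. This follows from the cited results of Levin, or of Gheibi--Takahashi, on ideals with $I\cap \m^2=\m I$. By \cite[Example 7]{Briggs:2018}, a large map is $\pil{n}$ for every $n$, in particular $\pil{3}$. \Cref{L_PiLargeCI}, implication $(3)\Rightarrow(4)$, then says $\varphi$ is c.i. at the maximal ideal. Since $\varphi$ is surjective, a Cohen factorization is $R\to\widehat R\to \widehat R/I\widehat R$. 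Hence $I\widehat R$ is generated by a regular sequence, and by faithful flatness of completion so is $I$. This contradicts the hypothesis.

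If citing largeness feels too indirect, I would instead use the fact from \cite{Gheibi/Takahashi:2021} that $I\cap\m^2=\m I$ gives a surjection $\pi_*(R)\to\pi_*(R/I)$ in the relevant degrees. The reasoning is the same. The main thing to check in this step is that the results used for $\pil{3}$ apply to arbitrary surjections $\varphi$, not only to ones with finite flat dimension; largeness is a statement on Tor and involves no finiteness hypothesis, so I expect no problem.

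\textbf{Finite upper c.i. dimension.} Because $R$ is a complete intersection, $\widehat R\cong Q/(\boldsymbol{x})$ with $Q$ regular and $\boldsymbol{x}$ a $Q$-regular sequence. For $\varphi$ take the Cohen factorization $R\to R'=\widehat R\to \widehat R/I\widehat R=\widehat S$. For the quasi-deformation take $h=\mathrm{id}_{R'}$, which is weakly regular with closed fiber a field, and take $f\colon Q\to R''=\widehat R$, the canonical surjection, whose kernel $(\boldsymbol{x})$ is c.i. Then $\widehat S\otimes_{R'}R''=\widehat S$, and $\pdim{Q}{\widehat S}<\infty$ because $Q$ is regular. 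This is exactly the diagram required in \Cref{uppercidimension}, so $\uppercidim{\varphi}<\infty$. This step is routine.

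I expect the main obstacle to be justifying that $\varphi$ is $\pil{3}$. That is where the hypothesis $I\cap\m^2=\m I$ is used, and it requires quoting the correct external result linking this condition to largeness.
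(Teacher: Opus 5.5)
\textbf{The largeness step is a genuine gap.} Your second half, the quasi-deformation $\widehat R\xrightarrow{\mathrm{id}}\widehat R\leftarrow Q$ over the Cohen factorization $R\to\widehat R\to\widehat R/I\widehat R$, is exactly the paper's argument. The first half, however, rests on a false claim: $I\cap\m^2=\m I$ does not make $R\to R/I$ large. By \cite[2.3]{Gheibi/Takahashi:2021} this condition is only \emph{necessary} for largeness, and that is all the paper uses it for in \Cref{E_NontrivialExample}. Your fallback, that the condition forces surjectivity on $\pi_3$, fails for the same reason.

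\textbf{Counterexample.} Take $R=k[[x,y,z]]/(xy,z^2)$ and $I=(x+z)$.
\begin{itemize}
\item $R$ is a complete intersection, so $\pi_3(R)=0$.
\item $I$ is principal with generator outside $\m^2$, so $I\cap\m^2=\m I$.
\item $R/I\cong k[[x,y]]/(x^2,xy)$ is not a complete intersection, so $\pi_3(R/I)\neq 0$.
\end{itemize}
Hence $\varphi$ is not $\pil{3}$, and so it is not large. This example satisfies every hypothesis of the lemma. The failure therefore has nothing to do with finiteness conditions on largeness, despite your remark that ``largeness is a statement on Tor and involves no finiteness hypothesis''.

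\textbf{Why the contradiction hypothesis does not rescue it.} Inside your argument by contradiction you also assume $\pdim{R}{R/I}<\infty$. The example above has infinite projective dimension, so it does not directly contradict the claim you need there. But the available ways to get largeness in that situation, e.g.\ Nagata's theorem for regular elements outside $\m^2$, first require knowing that $I$ is generated by a regular sequence that is part of a minimal generating set of $\m$. That is precisely the conclusion you are after. So the route through \cite[Example 7]{Briggs:2018} and \Cref{L_PiLargeCI} is at best circular.

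\textbf{The missing ingredient.} You need the direct fact the paper uses, extracted from the proof of \cite[Example 2.4.1]{Gheibi/Takahashi:2021}: an ideal with $I\cap\m^2=\m I$ and $\pdim{R}{R/I}<\infty$ is generated by a regular sequence. With that fact, infinite projective dimension follows immediately from the hypothesis that $I$ is not generated by a regular sequence. No appeal to largeness or to $\pi_*$ is needed.
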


\begin{proof}
If $R/I$ has finite projective dimension, then by the proof of \cite[Example 2.4.1]{Gheibi/Takahashi:2021} and the equality $I\cap \m^{2}=\m I$ implies $I$ is generated by a regular sequence. To see that $\uppercidim{\varphi}$ is finite, notice that $\widehat{R}$ is a quotient of a regular local ring $Q$ by a regular sequence. Since 
\[R\rightarrow \widehat{R}\rightarrow \widehat{R/I}\]
is a Cohen factorization of $\varphi$ and since 
\[R\rightarrow \widehat{R}\leftarrow Q\]
is a quasideformation, $\uppercidim{\varphi}$ is finite.
\end{proof}

\begin{example} \label{E_NontrivialExample} Let $(R,\m,k)$ be a complete intersection and $I$ a nonzero ideal so that $I\cap \m^{2}=\m I$. In other words, a minimal generating set for $I$ can be extended to a generating set for $\m$. Note that by \cite[2.3]{Gheibi/Takahashi:2021} the equality $I\cap \m^{2}=\m I$ is a necessary condition for the quotient map $\varphi\colon R\rightarrow R/I$ to be large. 
\begin{enumerate}
\item Assume $R$ is not regular. By \cite[Example 2.4.3]{Gheibi/Takahashi:2021} the quotient map $R\rightarrow k$ is large. By \cite[Theorem 2.7]{Assmus:1959} and \cite[Theorem 2.3.11]{Bruns/Herzog:1998} $R$ is a complete intersection if and only if $\mathfrak{m}$ is a q.c.i. ideal, and so the quotient map $R\rightarrow k$ is q.c.i.. Also, $R/I$ has infinite projective dimension and $\uppercidim{\varphi}$ is finite by \Cref{L_LemmaForNontrivialExample}.
\item If $I$ is contained in the socle of $R$, then $\varphi$ is large by \cite[Example 2.4.2]{Gheibi/Takahashi:2021}. Also, since $I$ is contained in the socle, $I$ cannot be generated by regular sequence, and so $R/I$ has infinite projective dimension and $\uppercidim{\varphi}$ is finite by \Cref{L_LemmaForNontrivialExample}.  
\item If $I$ is a summand of $\m$ that is not generated by a regular sequence, then $\varphi$ is large by \cite[Example 2.4.6]{Gheibi/Takahashi:2021}. Also, $R/I$ has infinite projective dimension and $\uppercidim{\varphi}$ is finite by \Cref{L_LemmaForNontrivialExample}. The ideal $I=(x)$ in the ring $R=k[[x,y]]/(xy)$ satisfies these conditions. Note that if $\m$ is decomposable, then $R$ is forced to be a one dimensional hypersurface by \cite[Corollary 2.7]{Nasseh/SatherWagstaff/Takahashi/VandeBogert:2019}. Local rings with decomposable maximal ideal are known as \textit{fiber product rings}, see \cite[Fact 2.1]{Nasseh/SatherWagstaff/Takahashi/VandeBogert:2019}.
\item Assume $I$ is not generated by a regular sequence and there is a local homomorphism $h\colon R/I\rightarrow R$ so that $\varphi h$ is the identity on $R/I$. Then $\varphi$ is large by the natural change of ring homomorphisms on $\tor_{(-)}(k,k)$. 
Also, $R/I$ has infinite projective dimension and $\uppercidim{\varphi}$ is finite by \Cref{L_LemmaForNontrivialExample}.
\end{enumerate}
\end{example}

We end this paper by showing how the work above relates to Andr\'e-Quillen homology.

\begin{chunk}
    When $\Char{\ell}=0$, there is an isomorphism
    \[\pi_{n+1}(\varphi)\cong \daq{n}{S}{R}{\ell}\]
    for all $n$ by \cite{Quillen:1970, Avramov/Halperin:1987}, and so one can use \Cref{T_UpperPiDimTheorem} to recover the following proposition in this setting. The following is a consequence of \cite[Theorem A]{Briggs/Iyengar:2020}.  
\end{chunk}

\begin{proposition} \label{P_QuillenConjecture} Let $\varphi\colon R\rightarrow (S,\n,\ell)$ be a local homomorphism with $\uppercidim{\varphi}$ finite. If $\daq{n}{S}{R}{\ell}=0$ for some $n\geq 2$, then $\varphi$ is q.c.i. at $\n$.
\end{proposition}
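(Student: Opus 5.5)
We deduce \Cref{P_QuillenConjecture} from \cite[Theorem A]{Briggs/Iyengar:2020}, arguing on the quasi-deformation in the same way as in the proof of \Cref{T_UpperPiDimTheorem}. Fix a diagram witnessing $\uppercidim{\varphi}<\infty$: a Cohen factorization $R\to R'\xrightarrow{\varphi'}\widehat S$, a weakly regular map $h\colon R'\to R''$, and a c.i. surjection $f\colon Q\to R''$ with $\pdim{Q}{(\widehat S\otimes_{R'}R'')}<\infty$. Write $g\colon R''\to \widehat S\otimes_{R'}R''$ for the induced surjection.

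The first step is to transport the vanishing hypothesis from $\varphi$ to $gf$. André--Quillen homology is insensitive to weakly regular and completion maps in the relevant degrees. Specifically:
\begin{enumerate}
\item Flat base change and the Jacobi--Zariski sequence for $R\to R'\to\widehat S$ identify $\daq{n}{S}{R}{\ell}$ with $\daq{n}{\widehat S}{R'}{\ell}$ for $n\geq 2$. This uses that $\daq{m}{R'}{R}{-}=0$ for $m\geq 1$, since $\dot\varphi$ is weakly regular, and that completion is flat with trivial cotangent complex.
\item Flat base change along $h$ gives $\daq{n}{\widehat S}{R'}{\ell}\otimes_\ell L\cong \daq{n}{\widehat S\otimes_{R'}R''}{R''}{L}$, where $L$ is the residue field of $R''$. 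Thus $\daq{n}{g}{}{}$ vanishes with coefficients in the residue field.
\item The Jacobi--Zariski sequence for $Q\to R''\to \widehat S\otimes_{R'}R''$, together with the fact that $f$ is c.i. (so $\daq{m}{R''}{Q}{-}=0$ for $m\geq 2$), then shows that $\daq{n}{gf}{}{L}=0$ whenever $n\geq 3$.
\end{enumerate}
The case $n=2$ needs separate care: the relevant piece of the sequence is $\daq{2}{g}{}{}\to\daq{1}{f}{}{}\to\daq{1}{gf}{}{}\to\daq{1}{g}{}{}$, and here I would instead aim directly at the conclusion for $g$.

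The main step applies \cite[Theorem A]{Briggs/Iyengar:2020}. That result states that for a surjective map $Q\to T$ with $\pdim{Q}{T}<\infty$, the vanishing of $\D_n(T|Q;\ell)$ for a single $n\geq 2$ forces the map to be c.i. Applying it to $gf$ shows that $gf$ is c.i. when $n\geq 3$. For $n=2$ I would apply the same theorem's consequence for $g$ itself, using that $g$ has finite c.i.-dimension. Once $gf$ is c.i., the proof of \Cref{T_UpperPiDimTheorem} applies verbatim: \cite[8.9.1]{Avramov/Henriques/Sega:2013} gives that $g$ is q.c.i., and \cite[8.7]{Avramov/Henriques/Sega:2013} descends this along the weakly regular map $h$ to show $\varphi$ is q.c.i. at $\n$.

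The main obstacle is the bookkeeping in the low-degree case $n=2$. There the c.i. map $f$ contributes $\daq{1}{R''}{Q}{L}$, so vanishing of $\D_2$ does not transfer directly to $gf$. I would first try to use the form of \cite[Theorem A]{Briggs/Iyengar:2020} stated for finite c.i.-dimension (applied to $g$) rather than for finite projective dimension. Failing that, I would use the injectivity $\daq{2}{g}{}{L}\hookrightarrow\daq{1}{f}{}{L}$, which holds because $\daq{2}{gf}{}{L}$ vanishes when $gf$ is onto with $f$ c.i. As a separate consistency check, in characteristic zero the isomorphism $\pi_{n+1}(\varphi)\cong\daq{n}{S}{R}{\ell}$ lets one reduce to \Cref{T_UpperPiDimTheorem} when $n$ is even.
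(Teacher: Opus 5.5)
Your route is the paper's: move the vanishing to $g$ through the Cohen factorization and flat base change along $h$, pass to $gf$ by Jacobi--Zariski, apply \cite[Theorem A]{Briggs/Iyengar:2020} to the finite-projective-dimension surjection $gf$, and finish with \cite[8.9.1, 8.7]{Avramov/Henriques/Sega:2013}.

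\textbf{The gap is your treatment of $n=2$.} It comes from misreading the Jacobi--Zariski sequence. Write $T=\widehat S\otimes_{R'}R''$. The sequence for $Q\to R''\to T$ is
\[
\cdots\to\daq{n}{R''}{Q}{L}\to\daq{n}{T}{Q}{L}\to\daq{n}{T}{R''}{L}\to\daq{n-1}{R''}{Q}{L}\to\cdots,
\]
so $\daq{n}{T}{Q}{L}$ sits between $\daq{n}{R''}{Q}{L}$ and $\daq{n}{T}{R''}{L}$. Since $f$ is c.i., $\daq{n}{R''}{Q}{L}=0$ for every $n\geq 2$, including $n=2$. Hence $\daq{n}{T}{Q}{L}=0$ for the given $n\geq 2$. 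The segment $\D_2(g)\to\D_1(f)\to\D_1(gf)\to\D_1(g)$ you single out only governs $\D_1(gf)$ and is irrelevant. This is exactly the paper's argument, uniform in $n\geq 2$, and with it your step (3) and main step need no case split.

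As written, however, your proposed fixes for $n=2$ would not work:
\begin{itemize}
\item \cite[Theorem A]{Briggs/Iyengar:2020} needs finite projective (flat) dimension, which $g$ need not have; that is the whole reason for passing to $gf$. There is no finite c.i.-dimension version to invoke, and for $g$ such a statement would be the proposition itself.
\item $\daq{2}{T}{Q}{L}$ does not vanish merely because $gf$ is onto and $f$ is c.i. Its vanishing is equivalent to $gf$ being c.i., which is what you are trying to prove.
\item The injection $\daq{2}{T}{R''}{L}\hookrightarrow\daq{1}{R''}{Q}{L}$ carries no information, since its source is zero by hypothesis.
\end{itemize}
If you did want a separate argument for $n=2$, Andr\'e's classical criterion would do it: for a surjection of local rings, vanishing of $\D_2$ with residue field coefficients means the kernel is generated by a regular sequence. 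That would show $g$, hence $\varphi$ at $\n$, is even c.i.

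\textbf{A smaller problem, also at $n=2$, is in your step (1).} A weakly regular map need not satisfy $\daq{1}{R'}{R}{-}=0$. For example, a purely inseparable field extension $k\subset\ell$ is weakly regular, but $\daq{1}{\ell}{k}{\ell}\neq 0$. Only $\daq{m}{R'}{R}{\ell}=0$ for $m\geq 2$ holds. That suffices for $n\geq 3$. For $n=2$, however, Jacobi--Zariski only gives an exact sequence $0\to\daq{2}{\widehat S}{R}{\ell}\to\daq{2}{\widehat S}{R'}{\ell}\to\daq{1}{R'}{R}{\ell}$. You must then show the connecting map is zero. It is, because $\daq{1}{R'}{R}{\ell}\to\daq{1}{\ell}{R}{\ell}$ is injective when the closed fiber is regular. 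This is what \cite[Lemma 1.7]{Avramov:1999}, cited in the paper, supplies.

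Similarly, $S\to\widehat S$ need not have trivial cotangent complex in general. What you actually need, and what flat base change gives, is $\daq{*}{\widehat S}{S}{\ell}=0$.
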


\begin{proof}
Let 
\[\begin{tikzcd}
	&& {R'} && {\widehat{S}} \\
	Q && {R''} && {\widehat{S}\otimes_{R'}R''}
	\arrow["{\varphi'}", from=1-3, to=1-5]
	\arrow[from=1-3, to=2-3]
	\arrow[from=1-5, to=2-5]
	\arrow["f", from=2-1, to=2-3]
	\arrow["g", from=2-3, to=2-5]
\end{tikzcd}\]
be a diagram witnessing the finiteness of the upper complete intersection dimension of $\varphi$ and let $L$ be the residue field of $\widehat{S}\otimes_{R'}R''$. Since $\ell$ is a subfield of $L$, $\daq{n}{S}{R}{L}$ is a direct sum of $\daq{n}{S}{R}{\ell}$, giving the equality below
\[\daq{n}{\widehat{S}\otimes_{R'}R''}{R''}{L} \cong \daq{n}{\widehat{S}}{R'}{L} \cong \daq{n}{S}{R}{L} = 0.\]
The first isomorphism is by \cite[6.3]{Iyengar:2007} and the second isomorphism is by \cite[Lemma 1.7]{Avramov:1999}. Since $f$ is c.i., $\daq{i}{R''}{Q}{L}=0$ for $i\geq 2$, and so from the Jacobi-Zariski exact sequence \cite[Theorem 5.1]{Andre:1974}
\[\daq{n}{R''}{Q}{L}\rightarrow \daq{n}{\widehat{S}\otimes_{R'}R''}{Q}{L}\rightarrow \daq{n}{\widehat{S}\otimes_{R'}R''}{R''}{L}\]
we get $\daq{n}{\widehat{S}\otimes_{R'}R''}{Q}{L} = 0$. The implies $gf$ is c.i. by \cite[Theorem A]{Briggs/Iyengar:2020} and \cite[Proposition 4.57]{Andre:1974}. Now by \cite[8.9]{Avramov/Henriques/Sega:2013} $g$ is q.c.i., which implies $\varphi$ is q.c.i. at $\n$ by \cite[8.7]{Avramov/Henriques/Sega:2013}.
\end{proof}

It is unknown to the authors whether the assumption that the map has finite upper c.i. dimension is necessary or if it can be weakened to having finite c.i. dimension. Thus, we ask the following question.

\begin{question}
    Do \Cref{T_UpperPiDimTheorem} and \Cref{P_QuillenConjecture} still hold under the assumption that $\varphi \colon R \to S$ has finite c.i. dimension instead of finite upper c.i. dimension?
\end{question}

\bibliographystyle{alpha}
\bibliography{references}
\end{document}